\theoremstyle{plain} 
\newtheorem{theorem}{Theorem}
\newtheorem{lemma}{Lemma}
\newtheorem{corollary}{Corollary}
\newtheorem{heuristic}{Heuristic}
\theoremstyle{definition}
\newtheorem*{definition}{Definition} 
\newtheorem{exmp}{Example}[section]
\theoremstyle{remark}
\newcommand{\ex}{\mathbb{E}}
\title{Distribution of Nonzero Digits in a Greedy Sequence of Powers of Two}
\date{}
\author{David Wu}
\begin{document}
\maketitle
\begin{abstract}
Understanding the distribution of digits in the expansions of perfect powers in different bases is difficult. Rather than consider the asymptotic digit distributions, we consider the base-10 digits of a restricted sequence of powers of two. We apply elementary methods to show that this sequence of powers of two can be constructed to preserve trailing digits while locally maximizing the number of zeros between nonzero digits. We also provide a heuristic description of the trailing digits of these powers of two. 
\end{abstract}

\section{Introduction}

Let's consider the distribution of the base-10 digits of powers of two. A natural heuristic argues that the fraction of nonzero digits should be $9/10$. Previous results proved bounds on the density of nonzero digits, such as Stewart's lower bound of $\ln(x)/(C + \ln\ln(x))$ nonzero digits \cite{CS} or Radcliffe's elementary lower bound of $\log_4(x)$ nonzero digits \cite{DR}. Instead of proving a lower bound over all powers of two, we consider the problem of locally maximizing the sparsity of nonzero digits in a 10-adic sequence of powers of $2$. We use a very greedy algorithm that has surprising properties, and is well-suited to analyzing 10-adic powers of $2$. 

Recall that every power of two (and every natural number) can be uniquely represented in the form \[2^{\chi} = b_1 \cdot 10^{d_1} + b_2 \cdot 10^{d_2} + \cdots, \] where $d_1 < d_2 < \cdots$ and $1 \le b_i \le 9$. In other words, this representation singles out the exponents $d_n$ that correspond to nonzero digits in the base-10 expansion of $2^\chi$. Our goal is construct a sequence of exponents $p_n$ of $2$ such that the the sequence $d_1, d_2, \cdots$ is locally sparse. That is, we try to locally maximize $d_n - d_{n-1}$, the gaps between adjacent nonzero digits. We explain the algorithm formally and provide a running example.

We proceed as follows. Suppose we have constructed $p_n$ satisfying \[2^{p_n} \equiv b_1 \cdot 10^{d_1} + b_2 \cdot 10^{d_2} + \cdots + b_n \cdot 10^{d_n} \pmod{10^{d_n+1}},\] where $1 \le b_i \le 9$ and $0 = d_1 < d_2 < \cdots$. 

Then, for each $p_{n+1}$, repeat this process to find $p_{n+2}$. If some value of $p_{n+1}$ yields a value of $p_{n+2}$ that is smaller than the value of $p_{n+2}$ by some other $p_{n+1}$, discard the smaller $p_{n+1}$ and its descendants $p_{n+2}$, etc.

Consider all powers $2^p$ with \[2^p \equiv b_1 \cdot 10^{d_1} + b_2 \cdot 10^{d_2} + \cdots + b_n \cdot 10^{d_n} \pmod{10^{e}},\] for $e > d_n$ and retain only those exponents $p$ that give the largest value of $e-d_n$. We denote each of these values of $p$ as $p_{n+1}$. For example, let $p_1 = 3$ so $2^{p_1} \equiv 8 \cdot 10^0 \pmod{10^1}$. In our example, we look at $p > 3$ and find that $p \equiv 3 \pmod{100}$ gives $e - d_1 = 3$. We can show that this is the largest possible $e-d_1$. Hence, we can take $p_2$ to be the residue class $3 \pmod{100}$. 

In our example, let's take $p_2 = 103$. Then \[2^{p_2} = 8 \cdot 10^0 + 3 \cdot 10^3 + \cdots .\] Then we look for $p > 103$ such that $e-d_2$ (here, we can see that $d_2 = 3$ is maximal). We find that if $p \equiv 2403 \pmod{12500}$, then $e - d_2 = 2$. However, if $p \equiv 2013 \pmod{12500}$ then $e - d_2 = 3$, and it can be shown that this is the maximum possible $e - d_2$. Therefore, we take $p_3$ to be the residue class $2013 \pmod{12500}$ and discard all other residue classes such as $p \equiv 2403 \pmod{12500}$. 

We illustrate the next two iterations. Taking $p_3 = 2103$, we find \[2^{p_3} = 8 \cdot 10^0 + 3 \cdot 10^3 + 1 \cdot 10^6 + \cdots \] Then we search for $p > 2103$ and find $p_4 = 670414603$. This gives \[2^{p_4} = 8 \cdot 10^0 + 3 \cdot 10^3 + 9 \cdot 10^6 + 1 \cdot 10^{13} + \cdots.\] Hence we only preserve the residue class $670414603 \pmod{4 \cdot 5^{12}}$. The process continues thereafter.

This yields a sequence of exponents $p_1, p_2, p_3, \ldots$ of $2$ and corresponding exponents $d_1, d_2, d_3, \ldots$ of $10$. In other words, we are using a greedy strategy to construct a $10$-adic power of $2$ whose infinite $10$-adic expansion has many zeros. 

In other words, when constructing $p_{n+1}$, we preserve $b_i$ and $d_i$ for $1 \le i \le n$. Suppose we have constructed $p_n$ satisfying \[2^{p_n} \equiv b_1 \cdot 10^{d_1} + b_2 \cdot 10^{d_2} + \cdots + b_n \cdot 10^{d_n} \pmod{10^{d_n+1}},\] where all $b_i \neq 0$ and $0 = d_1 < d_2 < \cdots$. We only consider powers $2^p$ with \[2^p \equiv b_1 \cdot 10^{d_1} + b_2 \cdot 10^{d_2} + \cdots + b_n \cdot 10^{d_n} \pmod{10^{e}}.\]

With this restriction, we show in Section \ref{sec:3} that there are very few possible values of $p$. More specifically, suppose we have constructed $p_n \pmod{4 \cdot 5^{d_k-1}}$. Then if $e = d_n+1$, the only valid exponents $p$ are $p \equiv p_n + i \cdot 4 \cdot 5^{d_n-1} \pmod{4 \cdot 5^{e}}$ for $1 \le i \le 5$. Pseudocode for the algorithm can be found in the Appendix.

Our strategy does not necessarily give powers of $2$ with the highest density of zeros. For example, $2^{103}$ has only $5$ zeros among its 32 digit, while $2^{102}$ has seven zeros among its $31$ digits. However, we obtain many zeros in the early terms of the expansion. 

In Section \ref{sec:3}, we show that we always have $d_{n+1} - d_n \ge 2$, so we can always obtain at least $2$ consecutive zeros after each nonzero digit. Moreover, we show that our constraints create a unique sequence of $d_n$. We also investigate various other phenomena with digit distributions that occur. In Section \ref{sec:4}, we provide a heuristic that predicts that the density $\#\{d_n \le x\}/x \rightarrow 13/4$ as $x \rightarrow \infty$. This agrees well with the data we computed in Tables 1 and 2.

\section{Properties of the greedy sequence} \label{sec:3}
We first state and prove a lemma. 
\begin{lemma}\label{lemma:1}Let $i \ge j \ge m \ge 1$. Then $2^i \equiv 2^j \pmod{10^m}$ if and only if $i \equiv j \pmod{4 \cdot 5^{m-1}}$. 
\end{lemma}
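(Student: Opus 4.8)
The plan is to analyze the multiplicative structure of $2$ modulo $10^m = 2^m 5^m$ via the Chinese Remainder Theorem, separating the condition $2^i \equiv 2^j \pmod{10^m}$ into a condition modulo $2^m$ and a condition modulo $5^m$. Since $i \ge j \ge m$, both $2^i$ and $2^j$ are divisible by $2^m$, so the congruence modulo $2^m$ is automatic (both sides are $\equiv 0$). Thus the entire content of the statement reduces to understanding when $2^i \equiv 2^j \pmod{5^m}$.

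For the $5$-adic part, first I would cancel the common factor: writing $i \ge j$ and factoring out $2^j$, the congruence $2^i \equiv 2^j \pmod{5^m}$ becomes $2^j(2^{i-j} - 1) \equiv 0 \pmod{5^m}$. Since $\gcd(2, 5) = 1$, the factor $2^j$ is a unit modulo $5^m$ and may be discarded, leaving $2^{i-j} \equiv 1 \pmod{5^m}$. So the problem becomes computing the multiplicative order of $2$ modulo $5^m$. I would then invoke the standard fact that $2$ is a primitive root modulo every power of $5$, so that the order of $2$ modulo $5^m$ equals $\varphi(5^m) = 4 \cdot 5^{m-1}$. Consequently $2^{i-j} \equiv 1 \pmod{5^m}$ holds if and only if $4 \cdot 5^{m-1} \mid (i - j)$, which is precisely the claimed condition $i \equiv j \pmod{4 \cdot 5^{m-1}}$.

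The one step that genuinely needs care, and which I regard as the main obstacle, is justifying that $2$ is a primitive root modulo $5^m$ for all $m \ge 1$. I would handle this either by citing the classical theorem that the group of units modulo $p^k$ is cyclic for odd primes $p$ together with a direct check that $2$ has order $4 = \varphi(5)$ modulo $5$ and that $2^4 = 16 \not\equiv 1 \pmod{25}$ (so the order does not collapse when lifting), or by an explicit order-lifting argument: knowing $\mathrm{ord}_5(2) = 4$ and verifying the lifting-the-exponent condition at the first level, one shows inductively that $\mathrm{ord}_{5^m}(2) = 4 \cdot 5^{m-1}$. Either route is elementary and self-contained, which suits the paper's stated use of elementary methods.

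Finally, I would note that the hypotheses $i \ge j \ge m \ge 1$ are exactly what make the $2$-adic part vacuous; without $j \ge m$ the factor $2^j$ would not absorb the full $2^m$, and the clean biconditional could fail. Assembling these observations, the forward direction (congruence of exponents implies congruence of the powers) and the reverse direction both follow from the single equivalence $2^{i-j} \equiv 1 \pmod{5^m} \iff 4 \cdot 5^{m-1} \mid (i-j)$, completing the proof.
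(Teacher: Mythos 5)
Your proof is correct and follows essentially the same route as the paper's: split $10^m = 2^m \cdot 5^m$ by the Chinese Remainder Theorem, observe the $2^m$ part is automatic since $i \ge j \ge m$, and reduce the $5^m$ part to the fact that $2$ is a primitive root modulo $5^m$ with order $4 \cdot 5^{m-1}$. Your extra care in justifying the primitive-root claim (via the check $2^4 \not\equiv 1 \pmod{25}$ and order-lifting) is a detail the paper simply cites, but the argument is the same.
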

\begin{proof}
Since $2$ is a primitive root mod $5^n$, we have $2^i \equiv 2^j \pmod{5^m}$ if and only if $i \equiv j \pmod{4 \cdot 5^{m-1}}$. Now since $i \ge j \ge m$, we have $2^i \equiv 2^j \equiv 0 \pmod{2^m}$. The result follows from Chinese Remainder Theorem.
\end{proof}

\begin{lemma}\label{lemma:2}
Let $i > m \ge 1$. Suppose $2^i \equiv x \pmod{10^m}$. Write \[2^{i + 4j5^{m-1}} \equiv x + a_j10^{m} \pmod{10^{m+1}},\] with $0 \le a_j \le 9$. As $j \ge 0$ runs through the integers $\pmod{5}$, the values of $a_j$ run through all the integers in either $\{1, 3, 5, 7, 9\}$ or $\{0, 2, 4, 6, 8\}$. 
\end{lemma}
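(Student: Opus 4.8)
The plan is to split the congruence into its $2$-adic and $5$-adic parts, invoking Lemma~\ref{lemma:1} for the $5$-adic side. First I would record the setup. Since $i > m$, every exponent $i + 4j5^{m-1}$ is at least $m+1$, so each power $2^{i+4j5^{m-1}}$ is divisible by $2^{m+1}$; and since $4j5^{m-1} \equiv 0 \pmod{4\cdot 5^{m-1}}$, Lemma~\ref{lemma:1} (with modulus $10^m$) shows all five powers are congruent to $x$ modulo $10^m$. This is exactly what justifies writing $2^{i+4j5^{m-1}} \equiv x + a_j 10^m \pmod{10^{m+1}}$ with a single digit $a_j$.

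The first main step is to prove the five digits $a_0,\dots,a_4$ are distinct. The exponents $i+4j5^{m-1}$ for $j = 0,1,2,3,4$ are pairwise incongruent modulo $4\cdot 5^m$, because their differences $4\cdot 5^{m-1}(j-j')$ with $0 < |j-j'| < 5$ are not divisible by $4\cdot 5^m$. Applying Lemma~\ref{lemma:1} now with modulus $10^{m+1}$ (valid since every exponent is at least $m+1$), the five powers are pairwise distinct modulo $10^{m+1}$; as they all reduce to $x$ modulo $10^m$, the digits $a_j$ must be five distinct elements of $\{0,1,\dots,9\}$.

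The heart of the argument is the parity step. Because $i > m$ gives $2^m \mid x$, I would write $x = 2^m t$ for an integer $t$ depending only on $x$, and then observe
\[ 2^{i+4j5^{m-1}} \equiv x + a_j 10^m = 2^m\bigl(t + a_j 5^m\bigr) \pmod{10^{m+1}}. \]
Reducing modulo $2^{m+1}$ and using that the left-hand side vanishes there (the exponent is at least $m+1$), I obtain $t + a_j 5^m \equiv 0 \pmod 2$, and since $5^m$ is odd this forces $a_j \equiv t \pmod 2$. As $t$ is independent of $j$, all five digits share a common parity.

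Combining the two steps completes the proof: the $a_j$ are five distinct digits of one fixed parity, and each parity class in $\{0,\dots,9\}$ contains exactly five digits, namely $\{1,3,5,7,9\}$ or $\{0,2,4,6,8\}$, so the $a_j$ exhaust precisely one of these sets. I expect the parity computation to be the only genuinely delicate point—tracking that exactly $2^m$ (and no more) divides $x$, and recognizing that the single additional factor of $2$ required to reach $2^{m+1}$ is what pins down the parity of $a_j$—whereas the distinctness falls out as a direct corollary of Lemma~\ref{lemma:1}.
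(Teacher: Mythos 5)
Your proposal is correct and follows essentially the same route as the paper: Lemma~\ref{lemma:1} gives the distinctness of the five digits $a_j$, and divisibility of $2^{i+4j5^{m-1}}$ by $2^{m+1}$ combined with $2^m \mid x$ pins down their common parity. Your write-up is in fact slightly more careful than the paper's (which abbreviates the division-by-$2^m$ step to the imprecise claim ``$x + a_j \equiv 0 \pmod 2$''), but the underlying argument is identical.
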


\begin{proof}Since $i > m$, we know that $2^{i + 4j5^{m-1}} \equiv 2^i \equiv 0 \pmod{2^{m+1}}$. Since $2^m$ divides $x$, we can divide through by $2^m$, which shows that $x + a_j \equiv 0 \pmod{2}$ Therefore the parity of $a_j$ is determined by $x$. Lemma \ref{lemma:1} says that distinct values of $j \pmod{5}$ give distinct values of $a_j$. The lemma follows easily. 
\end{proof}

For $n = 1, 2, 3, \ldots ,$ choose integers $p_n$ and write
\[2^{p_n} \equiv b_1 + b_210^{d_2} + b_310^{d_3} + \cdots +  b_n10^{d_n} \pmod{10^{d_n+1}},\]
where $1 \le b_i \le 9$ for each $i$ and where $d_1 = 0 < d_2 < d_3 < \cdots < d_n$. Assume that the $p_n$ are chosen by the ``greedy strategy''. That is, $p_1, p_2, \ldots, p_{n-1}$ are
chosen so that $p_n > d_n$ and $d_n - d_{n-1}$ is as large as possible.

\begin{theorem}\label{thm:1}If the $p_n$ are chosen by the greedy strategy, then the $b_i$ are odd for $i \ge 2$. 
\end{theorem}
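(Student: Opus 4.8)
The plan is to read off the parity of each nonzero digit directly from Lemma \ref{lemma:2} and to use greedy maximality to rule out the even case. The underlying point is the following: once the lower-order digits of $2^{p_n}$ are fixed, Lemma \ref{lemma:2} pins the digit in the next position to a single parity class, and all five values of that parity are attainable as the exponent ranges over its residue class. If that parity is even, then in particular $0$ is attainable, so the greedy algorithm could push the next nonzero digit further out; if that parity is odd, the digit is necessarily one of $\{1,3,5,7,9\}$ and cannot be zero. So I expect the theorem to reduce to showing that the forced parity at position $d_n$ must be odd whenever $n \ge 2$.

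First I would set up the relevant congruence. For $n \ge 2$, by the definition of the $d_i$ the digits of $2^{p_n}$ strictly between positions $d_{n-1}$ and $d_n$ all vanish, so
\[2^{p_n} \equiv b_1 + b_2 10^{d_2} + \cdots + b_{n-1}10^{d_{n-1}} \pmod{10^{d_n}},\]
and I would call the right-hand side $x$. By Lemma \ref{lemma:1}, the exponents realizing this congruence form a single residue class modulo $4 \cdot 5^{d_n - 1}$. Applying Lemma \ref{lemma:2} with $m = d_n$ and this $x$, as the exponent varies by multiples of $4 \cdot 5^{d_n - 1}$ the digit of $2^{p}$ in position $d_n$ runs through either all of $\{1,3,5,7,9\}$ or all of $\{0,2,4,6,8\}$. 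Since $b_n$ is precisely this digit for the chosen exponent $p_n$, it suffices to exclude the even case.

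To exclude it, I would argue by contradiction against the greedy maximality. Suppose the digit in position $d_n$ ranged over $\{0,2,4,6,8\}$. Then some exponent $p$ in the residue class gives digit $0$ there, that is, $2^{p} \equiv x \pmod{10^{d_n+1}}$. Such a $p$ preserves $b_1, \dots, b_{n-1}$ and $d_1, \dots, d_{n-1}$ exactly while extending the block of zeros past position $d_n$; hence its next nonzero digit occurs at some position $> d_n$, producing a gap strictly larger than $d_n - d_{n-1}$. This contradicts the assumption that $p_n$ was chosen so that $d_n - d_{n-1}$ is as large as possible. Therefore the digit in position $d_n$ lies in $\{1,3,5,7,9\}$, so $b_n$ is odd.

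The step I expect to require the most care is the contradiction with maximality: I must make sure the competing exponent $p$ is a legitimate candidate in the greedy process, namely that it retains every earlier nonzero digit at the same position and genuinely lengthens the initial run of zeros, and that a suitably large representative of the residue class can be chosen so that $p > d_n$, as required by the hypothesis $i > m$ of Lemma \ref{lemma:2}. I would also note explicitly why the restriction $n \ge 2$ is needed: it guarantees $d_n \ge 1$ and $p_n > d_n \ge 1$, so that the hypotheses of Lemma \ref{lemma:2} are met, whereas the excluded digit $b_1$ is the units digit of a power of two and is of course even. Finally, it is worth recording that the same analysis identifies the gap exactly, since the proof of Lemma \ref{lemma:2} shows the forced parity at position $m$ is that of $x/2^{m}$; hence the run of zeros terminates precisely at $m = v_2(x)$, which both pins down $d_n$ and reconfirms that the terminating digit is odd.
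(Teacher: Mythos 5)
Your proposal is correct and follows essentially the same route as the paper: set $x = b_1 + \cdots + b_{n-1}10^{d_{n-1}}$, apply Lemma \ref{lemma:2} with $m = d_n$ to see that the digit in position $d_n$ sweeps out a full parity class as the exponent varies over its residue class modulo $4\cdot 5^{d_n-1}$, and then note that the even class would make the digit $0$ attainable, contradicting the greedy maximality of $d_n - d_{n-1}$. Your added care about choosing a representative with $p > d_n$ and the remark that the run of zeros terminates exactly at $v_2(x)$ are sound refinements but do not change the argument.
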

\begin{proof}
In Lemma \ref{lemma:2}, let $x = b_1 + \cdots + b_{n-1} 10^{d_{n-1}}$, let $i = p_{n-1}$, and let $m = d_n$. Write \[2^{p_{n-1} + 4j5^{d_n}} \equiv x + a_j10^{d_n} \pmod{10^{d_n+1}}.\] For some $j = j_0$, we have $a_{j_0} = b_n$. Lemma \ref{lemma:1} shows that as $j$ runs through the integers $\pmod{5}$, $a_j$ will run through the integers between $0$ and $9$ that have the same parity as $b_n$. Therefore, if $b_n$ is even, we can pick $j$ such that $a_j = 0$. This means that $d_n$ is not maximal, so $b_n$ must be odd, as desired.  
\end{proof}

\begin{theorem}\label{thm:2}
Let \[2^{p_{n+1}} \equiv b_1 \cdot 10^{d_1} + b_2 \cdot 10^{d_2} + \cdots + b_{n+1} \cdot 10^{d_{n+1}} \pmod{10^{d_{n+1}+1}},\] where $p_n$ is chosen by the greedy strategy described in the introduction. Then $d_{n+1} - d_n \ge 2$. 
\end{theorem}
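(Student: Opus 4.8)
The plan is to reduce the claim to a single $2$-adic divisibility statement and then settle it using Theorem \ref{thm:1}. First I would record the reduction. By Lemma \ref{lemma:1}, the exponents $p$ that preserve the nonzero digits $b_1,\dots,b_n$, i.e.\ that satisfy $2^{p}\equiv X\pmod{10^{d_n+1}}$ with $X:=2^{p_n}\bmod 10^{d_n+1}$, are exactly those with $p\equiv p_n\pmod{4\cdot 5^{d_n}}$. Applying Lemma \ref{lemma:2} with $i=p_n$ and $m=d_n+1$ (legitimate since $2^{p_n}>10^{d_n+1}$ forces $p_n>d_n+1$), the digit of $2^{p}$ in position $d_n+1$ runs through all five residues of a single parity as $p$ ranges over this class, and that parity is the parity of $X/2^{d_n+1}$. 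Hence I can force a zero in position $d_n+1$, giving $d_{n+1}-d_n\ge 2$, precisely when this parity is even, that is, when $v_2(X)\ge d_n+2$.

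So the heart of the matter is to show $v_2(X)\ge d_n+2$. I would write $X=X_{n-1}+b_n10^{d_n}$, where $X_{n-1}=\sum_{i<n}b_i10^{d_i}$. Because every digit strictly between positions $d_{n-1}$ and $d_n$ vanishes, $2^{p_n}\equiv X_{n-1}\pmod{10^{d_n}}$, so $2^{d_n}\mid X_{n-1}$. Since $b_n$ is odd by Theorem \ref{thm:1}, the term $b_n10^{d_n}$ has $2$-adic valuation exactly $d_n$; comparing this with the (always valid) bound $v_2(X)\ge d_n+1$ forces $v_2(X_{n-1})=d_n$ exactly. Writing $X_{n-1}=2^{d_n}u$ with $u$ odd and using $5^{d_n}\equiv 1\pmod 4$, I obtain $X=2^{d_n}\left(u+b_n5^{d_n}\right)$, and a short computation shows $v_2(X)\ge d_n+2$ if and only if $u+b_n\equiv 0\pmod 4$.

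It remains to produce a $b_n$ meeting this congruence that is consistent with the greedy construction. By Lemma \ref{lemma:2} at level $m=d_n$, the admissible digits in position $d_n$ are exactly the five odd digits $\{1,3,5,7,9\}$, each realized by one of the five refinements of the residue class modulo $4\cdot 5^{d_n}$, and these cover both residues $1$ and $3$ modulo $4$. As $u$ is odd, $-u\bmod 4\in\{1,3\}$, so at least one admissible $b_n$ satisfies $u+b_n\equiv 0\pmod 4$. Crucially, all five choices give the same gap $d_n-d_{n-1}$, so the greedy strategy is free to select the one maximizing the subsequent gap; for such a $b_n$ we have $v_2(X)\ge d_n+2$, and the reduction above yields $d_{n+1}-d_n\ge 2$.

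The main obstacle I anticipate is the valuation bookkeeping of the second paragraph, namely pinning $v_2(X_{n-1})$ to the exact value $d_n$ and distilling the whole question into the clean congruence $u+b_n\equiv 0\pmod 4$. A secondary subtlety, which must be handled with care, is that the gap-$2$ bound genuinely depends on selecting $b_n$ in the correct class modulo $4$: an odd digit with $b_n\equiv u\pmod 4$ would leave $u+b_n\equiv 2\pmod 4$, hence $v_2(X)=d_n+1$ and a gap of only $1$. One therefore has to argue that the greedy rule's tie-breaking among equal current gaps does single out a digit with $u+b_n\equiv 0\pmod 4$, rather than merely that such a digit exists.
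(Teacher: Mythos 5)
Your argument is correct, and it reaches the conclusion by a genuinely different route from the paper. The paper argues by contradiction: assuming no zero can be forced at position $d_n+1$ for any of the five refinements, it forms the difference $\bigl(2^{4(j-1)5^{d_n-1}+4k\cdot 5^{d_n}}-1\bigr)2^{p_n+4\cdot 5^{d_n-1}}$, divides by $10^{d_n}$ and reduces modulo $100$ to get $a_j\equiv a_1\pmod 4$ for all $j\ge 1$, and then derives a contradiction by pigeonhole, since $a_1,\dots,a_4$ are four distinct odd digits while at most three odd digits lie in a single class modulo $4$. You instead compute the $2$-adic valuation of $X=X_{n-1}+b_n10^{d_n}$ directly and isolate the exact criterion $b_n\equiv -u\pmod 4$ (with $u=X_{n-1}/2^{d_n}$ odd) under which Lemma \ref{lemma:2} at level $m=d_n+1$ lands in the even parity class, so that a zero can be forced; the existence of such an odd digit is then immediate. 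Your version is constructive and slightly stronger: it identifies precisely which of the five candidate digits admit the first extra zero, namely the two or three odd digits in the class $-u\bmod 4$, which is consistent with Example \ref{examp:enforc} and anticipates the modulo-$4$ arguments of Corollaries \ref{cor:1}--\ref{cor:4}. Both proofs rest on the same two pillars --- Theorem \ref{thm:1} and the fact that all five odd digits at position $d_n$ are tied at step $n$, so the greedy look-ahead is free to select the favorable one --- and you are right to flag that tie-breaking step explicitly; the paper leaves it implicit. One small point to tighten: the hypothesis $i>m$ of Lemma \ref{lemma:2} at $m=d_n+1$ does not follow from the standing assumption $p_n>d_n$ (your justification ``$2^{p_n}>10^{d_n+1}$'' is not part of the setup); fix this by passing to a larger representative of the class of $p_n$ modulo $4\cdot 5^{d_n}$, exactly as the paper does when it restricts to $j\ge 1$ so that $p_n+4\cdot 5^{d_n-1}\ge d_n+2$.
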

\begin{proof}
Suppose $d_{n+1} - d_n = 1$. Then \[2^{p_n + 4j \cdot 5^{d_n-1} + 4k \cdot 5^{d_n}} \equiv b_1 \cdot 10^{d_1} + b_2 \cdot 10^{d_2} + \cdots + a_j \cdot 10^{d_n} + c_{j,k} \cdot 10^{d_n+1} \pmod{10^{d_n+2}},\] with $1 \le a_j, c_{j,k} \le 9$ for $j,k = 0,1,2,3,4$. Theorem \ref{thm:1} says that $a_j$ and $c_{j,k}$ must be odd. When $j \ge 1$, \[\left(2^{4(j-1)5^{d_n-1} + 4k \cdot 5^{d_n}} - 1\right)2^{p_n + 4 \cdot 5^{d_n-1}} \equiv (a_j-a_1) \cdot 10^{d_n} + (c_{j,k} - c_{1,0}) \cdot 10^{d_n+1} \pmod{10^{d_n+2}}.\] Since $5^{d_n}$ divides \[2^{4(j-1)5^{d_n-1} + 4k \cdot 5^{d_n}} - 1\] and $p_n + 4 \cdot 5^{d_n-1} \ge d_n + 2$ (this is why we used $j \ge 1$ instead of $j \ge 0$), we can divide by $10^{d_n}$ to obtain \begin{equation}\label{eq:mod4}
2^2N \equiv (a_j - a_1) + 10(c_{j,k} - c_{1,0}) \pmod{100},
\end{equation} for some integer $N$. Since $c_{j,k}$ and $c_{1,0}$ are odd, we find that $a_j \equiv a_1 \pmod{4}$. This allows at most three values of $a_j$. But Lemma \ref{lemma:2} says that $a_j$ runs through all values congruent to $a_1 \pmod{2}$, and we have omitted only $j = 0$ from consideration, so there are $4$ remaining distinct values of $a_j$. This contradiction implies that $d_{n+1} - d_n \ge 2$.  
\end{proof}

We now illustrate an example of Lemma \ref{lemma:2}.
\begin{exmp}\label{examp3}
We can compute $2^{103} = 8 + 3 \cdot 10^3 + 4 \cdot 10^4 + \cdots$. Here the third nonzero digit is $4$, which is even. As Lemma \ref{lemma:2} claims, we can construct the modified exponent $p' = 103 + 4 \cdot \varphi(5^4)$ such that $2^{p'} = 8 + 3 \cdot 10^3 + 1 \cdot 10^6 + \cdots$. The effect of adding $4 \cdot \varphi(5^4)$ to the exponent is that the nonzero even digit $4$ corresponding to $10^4$ was changed to a $0$. Furthermore, note that aside from the units digit, all of the nonzero digits in $2^{p'} \pmod{10^7}$ are odd.
\end{exmp}

Now suppose that we have greedily constructed $p_1, p_2, \ldots, p_{n}$. Recall that \[2^{p_n} = b_1 \cdot 10^{d_1} + b_2 \cdot 10^{d_2} + \cdots + b_n \cdot 10^{d_n} + \cdots.\] We claim that there is a unique choice of $b_n$ that corresponds with $p_{n+1}$.

\begin{definition}
For $1 \le k \le 9$, We say that $b_n=k$ is \textit{$\beta$-forceable} if there exists $e$ such that $2^{p_n} \equiv 2^e \pmod{10^{d_n+\beta}}$. In other words, we can find $e$ such that $2^e$ has at least $\beta$ zeros between $b_n$ and $b_{n+1}$. Equivalently, we require that $d_{n+1} - d_n \ge \beta + 1$. On the other hand, we say that $b_n$ is \textit{$\beta$--unforceable} if there does not exist $e$ such that $2^{p_n} \equiv 2^e \pmod{10^{d_n+\beta}}$.
\end{definition}

\begin{exmp}\label{examp:enforc}
We consider $p_1 = 3, p_2 = 103$, with $d_1 = 0$ and $d_2 = 3$. So $2^{p_1} = 8 \cdot 10^0$ and $2^{p_2} = 8 \cdot 10^0 + 3 \cdot 10^3 + \cdots$. Consider the values of $2^{p_2 + 4k \cdot 5^2}$:  \begin{align*}
    2^{p_2 + 0 \cdot 5^2} &= 8 \cdot 10^0 + 3 \cdot 10^3 + 4 \cdot 10^4 + 6 \cdot 10^5 + \cdots \\
    2^{p_2 + 4 \cdot 5^2} &= 8 \cdot 10^0 + 1 \cdot 10^3 + 1 \cdot 10^4 + 4 \cdot 10^5 + \cdots\\
    2^{p_2 + 8 \cdot 5^2} &= 8 \cdot 10^0 + 9 \cdot 10^3 + 7 \cdot 10^4 + 1 \cdot 10^5 + \cdots\\
    2^{p_2 + 12 \cdot 5^2} &= 8 \cdot 10^0 + 7 \cdot 10^3 + 4 \cdot 10^4 + 9 \cdot 10^5 + \cdots\\
    2^{p_2 + 16 \cdot 5^2} &= 8 \cdot 10^0 + 5 \cdot 10^3 + 1 \cdot 10^4 + 7 \cdot 10^5 + \cdots
\end{align*} Therefore, by looking at the parity of $b_3$ for each possible $b_2$, we see that $b_2 = 1$, $b_2 = 5$, and $b_2 = 9$ are $0$-\emph{forceable} (equivalently, they are $1$-\emph{unforceable}). Therefore, we only need to consider $b_2 = 3$ and $b_2 = 7$.

Next, we calculate $2^{p_2+0\cdot 5^2 + 4k \cdot 5^3}$ and $2^{p_2+12 \cdot 5^2 + 4k \cdot 5^3}$ for $0 \le k \le 4$ and find
\begin{align*}
    2^{p_2 + 0 \cdot 5^2 + 16 \cdot 5^3} &= 8 \cdot 10^0 + 3 \cdot 10^3 + 1 \cdot 10^6 + \cdots \\
    2^{p_2 + 12 \cdot 5^2 + 16 \cdot 5^3} &= 8 \cdot 10^0 + 7 \cdot 10^3 + 3 \cdot 10^5 + \cdots
\end{align*}
Note that there is a unique choice of $k$ that forces a zero after $b_2$. Therefore, we see that $b_2 = 3$ is $2$-\emph{forceable}, while $b_2 = 7$ is $1$-\emph{forceable}.
\end{exmp}

\begin{corollary}\label{cor:1} Consider the sequence $p_1$, $p_2$, $\cdots$, $p_{n-1}$, $p_n$. Suppose $b_n$ is $2$-\emph{forceable}. If $b_n = 3$ then it is the unique $2$-\emph{forceable} $b_n$. The same result holds if $b_n = 7$. That is, there is no alternate sequence $p_1$, $p_2$, $\cdots$, $p_{n-1}$, $p_n'$ such that $b_n' \neq b_n$ and $b_n'$ is $2$-\emph{forceable}. 
\end{corollary}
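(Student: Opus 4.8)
My plan is to argue by contradiction, and the whole proof will turn on the elementary fact that among the digits $0,1,\dots,9$, the values $3$ and $7$ are the only ones that are alone in their residue class modulo $8$ (the only collisions are $0\equiv 8$ and $1\equiv 9$). So suppose $b_n\in\{3,7\}$ is $2$-forceable and that, for the same prefix $p_1,\dots,p_{n-1}$, some digit $b_n'\neq b_n$ at the same position $d_n$ is also $2$-forceable. Let $\Sigma=b_1 10^{d_1}+\cdots+b_{n-1}10^{d_{n-1}}$ be the common lower part, occupying positions $0$ through $d_n-1$; this is fixed once $p_1,\dots,p_{n-1}$ are fixed. By the definition of $2$-forceability there are exponents $q,q'$ with
\[
2^{q}\equiv \Sigma + b_n 10^{d_n}\pmod{10^{d_n+3}},\qquad 2^{q'}\equiv \Sigma + b_n' 10^{d_n}\pmod{10^{d_n+3}},
\]
that is, each witnessing power of two has zeros in positions $d_n+1$ and $d_n+2$. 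By Lemma \ref{lemma:1} the valid exponents for each condition form a residue class modulo $4\cdot 5^{d_n+2}$, so I may take the representatives $q,q'\geq d_n+3$.

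Subtracting the two congruences cancels the fixed part $\Sigma$ and leaves
\[
2^{q}-2^{q'}\equiv (b_n-b_n')\,10^{d_n}\pmod{10^{d_n+3}}.
\]
I then read this modulo $2^{d_n+3}$ alone. Because $q,q'\geq d_n+3$, the left-hand side is $\equiv 0$. On the right-hand side, $10^{d_n}=2^{d_n}5^{d_n}$ with $5^{d_n}$ odd, so the $2$-adic valuation $v_2$ (the exponent of the largest power of $2$ dividing the argument) of $(b_n-b_n')10^{d_n}$ is exactly $d_n+v_2(b_n-b_n')$. Since $b_n\in\{3,7\}$ is the unique digit in its class modulo $8$, we have $b_n\not\equiv b_n'\pmod 8$, hence $v_2(b_n-b_n')\leq 2$ and the right-hand side has valuation at most $d_n+2$. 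Thus the right-hand side is nonzero modulo $2^{d_n+3}$, contradicting the vanishing of the left-hand side. Therefore no such $b_n'$ exists, which is exactly the claimed uniqueness (and the same computation with $b_n=7$ gives the parallel statement).

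The only step that requires genuine care — and the main obstacle — is the legitimacy of enlarging the witnessing exponents to $q,q'\geq d_n+3$, so that $2^{q}-2^{q'}\equiv 0\pmod{2^{d_n+3}}$; this is precisely what Lemma \ref{lemma:1} supplies, since each forcing condition pins down a residue class of exponents modulo $4\cdot 5^{d_n+2}$, and such a class contains arbitrarily large representatives. Everything else is a one-line valuation count. I would close with the remark that this argument in fact rules out \emph{every} competing digit $b_n'$, not merely the odd ones allowed by Theorem \ref{thm:1}, and that it breaks down only for the pair $\{1,9\}$, whose difference $8$ has $v_2=3$ and so survives modulo $2^{d_n+3}$ — which is exactly why the statement singles out the ``mod-$8$-isolated'' digits $3$ and $7$.
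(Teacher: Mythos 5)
Your proof is correct, and its engine is the same as the paper's: subtract the two witnessing congruences to get $2^{q}-2^{q'}\equiv (b_n-b_n')10^{d_n}\pmod{10^{d_n+3}}$, then exploit the fact that the left side vanishes $2$-adically once $q,q'\ge d_n+3$ (a normalization that Lemma \ref{lemma:1} licenses, exactly as you note). The difference is in the packaging. You extract the full conclusion $b_n\equiv b_n'\pmod 8$ in a single valuation count, whereas the paper splits into two cases: a mod-$4$ step that eliminates $1,5,9$, followed by a separate parity computation on the digit at position $d_n+2$ (the congruence $4+100a\equiv 0\pmod{1000}$) that eliminates $7$. Your unified version buys two things: it delivers Corollary \ref{cor:2} (the case $b_n=5$) at no extra cost, and it makes transparent why the pair $\{1,9\}$, whose difference has $2$-adic valuation $3$, escapes the argument and must instead be handled by Corollaries \ref{cor:3} and \ref{cor:4}. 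One trivial slip in your opening sentence: $3$ and $7$ are \emph{not} the only digits alone in their residue class mod $8$ --- your own parenthetical ($0\equiv 8$ and $1\equiv 9$ are the only collisions) shows that $2,4,5,6$ are also isolated, and among the odd digits (the only ones that can occur, by Theorem \ref{thm:1}) the isolated ones are $3$, $5$, and $7$. This misstatement is purely in the motivation and does not affect the proof, which uses only that $3$ and $7$ are each alone in their class.
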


\begin{proof}
Suppose that $b_n$ is $2$-\emph{forceable} and $b_n = 3$; the proof for $b_n = 7$ is analogous. By definition, we can construct $p_{n}$ such that $2^{p_{n}} \equiv x  + 3 \cdot 10^{d_n}  \pmod{10^{d_n+3}}$, where $x$ is a $d_n$ digit integer. We split the proof into two cases.

\vspace{0.5cm}
\noindent\emph{Case 1:} We replace $b_n$ with $1$, $5$, or $9$. 

This case is simple; working modulo $4$ as in the proof of Theorem \ref{thm:2}, we must have $b_n - 3 \equiv 0 \pmod{4}$. So $b_n$ cannot be $1$, $5$, or $9$.

\noindent\emph{Case 2:} We replace $b_n$ with $7$.

If we replace $b_n$ with $7$, we can force at least one more zero by using the modulo $4$ argument as in the proof of Theorem \ref{thm:2}. Now we have \begin{align*}
    2^{p_{n}} &\equiv x + 3 \cdot 10^{d_n} \pmod{10^{d_n+3}} \\
    2^{p_{n}'} &\equiv x + 7 \cdot 10^{d_n} + a \cdot 10^{d_n+2} \pmod{10^{d_n+3}},
\end{align*}
where $0 \le a \le 9$ and $p_n' = p_n + 4k \cdot 5^{d_n-1}$ for some $1 \le k \le 4$. Now we have $p_n \ge d_n$ and $5^{d_n} \mid 2^{p_n' - p_n} - 1 = 2^{4k \cdot 5^{d_n-1}} - 1$. Therefore, we can divide out by $10^{d_n}$; we arrive at $4 + 100a \equiv 0 \pmod{1000}$. Thus $a$ must be odd, which implies that $b_n = 7$ is $2$-\emph{unforceable}, as desired.
\end{proof}

\begin{corollary}\label{cor:2} Suppose that $b_n$ is $2$-\emph{forceable}. If $b_n=5$, then it is the unique $2$-\emph{forceable} $b_n$. 
\end{corollary}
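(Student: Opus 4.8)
The plan is to argue by contradiction, mirroring the structure of Corollary~\ref{cor:1} but pushing the $2$-adic bookkeeping one power of $2$ further. Since $b_n = 5$ is assumed $2$-forceable, I fix an exponent $p_n$ with
\[2^{p_n} \equiv x + 5 \cdot 10^{d_n} \pmod{10^{d_n+3}},\]
where $x = b_1 10^{d_1} + \cdots + b_{n-1} 10^{d_{n-1}} < 10^{d_n}$ is the common low-order part fixed by $p_1,\ldots,p_{n-1}$. Suppose, for contradiction, that some digit $b_n' \neq 5$ were also $2$-forceable, witnessed by an exponent $p_n'$ with $2^{p_n'} \equiv x + b_n' \cdot 10^{d_n} \pmod{10^{d_n+3}}$.

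First I would use Lemma~\ref{lemma:1} to tie the two exponents together. Both powers satisfy $2^{p_n} \equiv 2^{p_n'} \equiv x \pmod{10^{d_n}}$, so $p_n' \equiv p_n \pmod{4 \cdot 5^{d_n-1}}$. Since adding the period $4 \cdot 5^{d_n+2}$ preserves every digit up to position $d_n+2$, I am free to replace $p_n, p_n'$ by large representatives with $p_n' > p_n$ and $p_n - d_n \ge 3$; then $p_n' - p_n = 4t \cdot 5^{d_n-1}$ with $t \ge 1$. Subtracting the two displayed congruences gives $2^{p_n'} - 2^{p_n} \equiv (b_n' - 5)\,10^{d_n} \pmod{10^{d_n+3}}$, while factoring gives $2^{p_n'} - 2^{p_n} = 2^{p_n}\bigl(2^{4t5^{d_n-1}} - 1\bigr)$.

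The heart of the argument is a valuation count. By the lifting-the-exponent property underlying Lemma~\ref{lemma:1}, $v_5\bigl(2^{4t5^{d_n-1}} - 1\bigr) = d_n + v_5(t) \ge d_n$, and this factor is odd, while $v_2(2^{p_n}) = p_n \ge d_n$; hence $10^{d_n}$ divides $2^{p_n'} - 2^{p_n}$, and the quotient equals $2^{p_n - d_n} M$ with $M := \bigl(2^{4t5^{d_n-1}} - 1\bigr)/5^{d_n}$ an odd integer. Dividing the congruence through by $10^{d_n}$ therefore yields
\[2^{p_n - d_n} M \equiv b_n' - 5 \pmod{1000}.\]
Because $p_n - d_n \ge 3$ and $M$ is odd, the left-hand side is divisible by $8$, so $8 \mid (b_n' - 5)$. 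Since $b_n'$ is a single digit, the only possibility is $b_n' = 5$, contradicting $b_n' \neq 5$. Thus $5$ is the unique $2$-forceable digit.

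The step I expect to require the most care is the valuation bookkeeping that licenses the reduction modulo $8$. One must verify that $v_5\bigl(2^{p_n'} - 2^{p_n}\bigr) \ge d_n$ (so that division by $5^{d_n}$ stays within the integers) and that the cofactor $M$ is \emph{odd} (so that dividing out $2^{d_n}$ leaves exactly $2^{p_n-d_n}$ and no power of $2$ is lost), together with the harmless observation that a representative with $p_n - d_n \ge 3$ may always be chosen. Granting these, the single congruence modulo $8$ accomplishes in one stroke what Corollary~\ref{cor:1} did in two cases: reduction modulo $4$ already forces $b_n' \equiv 1 \pmod 4$, eliminating $b_n' \in \{3,7\}$, and the extra factor of $2$ then eliminates the remaining candidates $b_n' \in \{1,9\}$, leaving $b_n' = 5$ as the only survivor.
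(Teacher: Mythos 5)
Your proof is correct. The paper disposes of this corollary by declaring it ``entirely analogous'' to Corollary \ref{cor:1}, whose proof splits into two cases: a reduction modulo $4$ that eliminates the digits at distance $\pm 2$ from $b_n$, followed by a separate parity argument on the digit in position $d_n+2$ to eliminate the digits at distance $\pm 4$. You use the same underlying mechanism --- write $2^{p_n'} - 2^{p_n} = 2^{p_n}\left(2^{4t5^{d_n-1}}-1\right)$, divide by $10^{d_n}$, and read off $2$-adic information --- but you collapse both cases into the single congruence $2^{p_n-d_n}M \equiv b_n'-5 \pmod{1000}$ read modulo $8$, which kills every odd $b_n' \neq 5$ at once. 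This buys a shorter argument and, more importantly, makes explicit a point the paper glosses over: to reduce modulo $8$ one needs $p_n - d_n \ge 3$, not merely $p_n \ge d_n$ as asserted in Case 2 of the proof of Corollary \ref{cor:1}, and you correctly justify this by replacing the exponents with larger representatives modulo $4 \cdot 5^{d_n+2}$, which by Lemma \ref{lemma:1} does not disturb the digits below position $d_n+3$. Your valuation computation $v_5\left(2^{4t5^{d_n-1}}-1\right) = d_n + v_5(t)$ and the oddness of the cofactor $M$ are both right, so the division by $10^{d_n}$ is legitimate, and the conclusion $8 \mid (b_n'-5)$ forces $b_n'=5$ among single digits.
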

\begin{proof}
The proof is entirely analogous to the proof of Corollary \ref{cor:1}.
\end{proof}

The same property does not hold if either $1$ or $9$ happen to be $2$-\emph{forceable} because their difference is a multiple of $8$. However, the following two properties do hold.

\begin{corollary}\label{cor:3} $b_n=1$ is $2$-\emph{forceable} if and only if $b_n=9$ is $2$-\emph{forceable}. 
\end{corollary}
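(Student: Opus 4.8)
The plan is to prove both directions simultaneously by a symmetric construction: assuming $b_n = 1$ is $2$-\emph{forceable}, I will exhibit an exponent witnessing that $b_n = 9$ is $2$-\emph{forceable}, after which interchanging the roles of $1$ and $9$ gives the converse. Concretely, start from an exponent with $2^{p_n} \equiv x + 1 \cdot 10^{d_n} \pmod{10^{d_n+3}}$, where $x$ is the fixed $d_n$-digit prefix, so that the digits in positions $d_n+1$ and $d_n+2$ vanish. By Lemma \ref{lemma:1} we may enlarge $p_n$ by the period $4\cdot 5^{d_n+2}$ without changing this residue, so we may assume $p_n \ge d_n + 3$. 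I would first invoke Lemma \ref{lemma:2} at level $m = d_n$ to add a multiple $t_1 = 4k_1 5^{d_n-1}$ that changes the digit in position $d_n$ from $1$ to $9$; this is legitimate because both digits are odd and Lemma \ref{lemma:2} cycles that digit through all five odd values. The result is
\[2^{p_n + t_1} \equiv x + 9 \cdot 10^{d_n} + u \cdot 10^{d_n+1} + v \cdot 10^{d_n+2} \pmod{10^{d_n+3}},\]
and the task reduces to re-zeroing the digits $u$ and $v$.

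The mechanism for re-zeroing is again Lemma \ref{lemma:2}: adding a suitable multiple of $4 \cdot 5^{d_n}$ sets the digit in position $d_n+1$ to $0$ \emph{provided} $u$ is even, and then adding a multiple of $4 \cdot 5^{d_n+1}$ sets the digit in position $d_n+2$ to $0$ provided the resulting digit there is even; each such move leaves all lower digits (in particular the new $9$) untouched by Lemma \ref{lemma:1}. Thus the corollary comes down to two parity claims, which I would establish exactly as in Theorem \ref{thm:2} and Corollary \ref{cor:1}. Form $2^{p_n+t_1} - 2^{p_n} = 2^{p_n}(2^{t_1}-1)$, use $p_n \ge d_n$ together with $5^{d_n} \mid 2^{t_1}-1$ to divide through by $10^{d_n}$, and obtain a congruence
\[2^{\,p_n - d_n} M \equiv 8 + 10u + 100v \pmod{1000},\]
where $M$ is an odd integer.

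The decisive point, and the reason the statement is an ``if and only if'' rather than a uniqueness statement like Corollaries \ref{cor:1} and \ref{cor:2}, is that $9 - 1 = 8$. Since $p_n - d_n \ge 3$, the left-hand side is $\equiv 0 \pmod 8$; and because the digit difference contributes the term $8 \equiv 0 \pmod 8$, reduction modulo $8$ leaves $2u + 4v \equiv 0 \pmod 8$, forcing $u$ to be even. After re-zeroing position $d_n+1$, repeating the computation with the combined offset $t_1 + t_2$ (which still has $5^{d_n} \mid 2^{t_1+t_2}-1$) yields $2^{\,p_n-d_n} M' \equiv 8 + 100 v' \pmod{1000}$, whence $4v' \equiv 0 \pmod 8$ and $v'$ is even as well. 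Both digits can therefore be re-zeroed, producing an exponent $p_n'$ with $2^{p_n'} \equiv x + 9 \cdot 10^{d_n} \pmod{10^{d_n+3}}$, i.e. $b_n = 9$ is $2$-\emph{forceable}.

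I expect the main obstacle to be bookkeeping rather than any single hard idea. One must verify that the $2$-adic valuation $p_n - d_n$ really is at least $3$ so that the left-hand sides vanish modulo $8$; that each division by $10^{d_n}$ is exact with an \emph{odd} cofactor (this needs only $5^{d_n} \mid 2^{t}-1$, just as in Theorem \ref{thm:2}, together with $p_n \ge d_n$); and that Lemma \ref{lemma:2} is genuinely applicable at each re-zeroing step, which is exactly the evenness of $u$ and $v'$ just established. The contrast with Corollaries \ref{cor:1} and \ref{cor:2} is instructive: there the digit difference was $\equiv 4 \pmod 8$, which survives the reduction modulo $8$ and forces the next digit to be \emph{odd}; here the difference $8$ vanishes, which is precisely what makes $1$ and $9$ stand or fall together.
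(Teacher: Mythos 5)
Your argument is correct and follows essentially the same route as the paper: both proofs subtract the two exponents, divide the difference $2^{p_n}(2^{t}-1)$ by $10^{d_n}$ to get an odd-cofactor congruence mod $1000$, and exploit the fact that $9-1=8$ vanishes modulo $8$ (unlike the difference $4$ in Corollaries \ref{cor:1} and \ref{cor:2}) to force the higher digits to have matching parity, after which Lemma \ref{lemma:2} re-zeroes them. Your write-up is in fact slightly more careful than the paper's, since you make explicit the normalization $p_n \ge d_n+3$ needed for the left-hand side to vanish modulo $8$ and you perform the re-zeroing of positions $d_n+1$ and $d_n+2$ as two separate verified steps, whereas the paper folds the first of these into an appeal to ``the modulo $4$ argument.''
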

\begin{proof}
Suppose that $p_1$, $p_2$, $\ldots$, $p_n$ has been constructed such that $b_n = 1$. Then the modified sequence $p_1$, $p_2$, $\ldots$, $p_n'$ has $b_n' = 9$. 

Applying the modulo $4$ argument as in Theorem \ref{thm:2}, we find that if $b_n=1$ is $1$-\emph{forceable}, then $b_n' = 9$ is also $1$-\emph{forceable}. The reverse implication also follows. Now assume that $b_n=1$ is $1$-\emph{forceable}. So we have \begin{align*}
    2^{p_{n}} &\equiv x + 1 \cdot 10^{d_n} + a \cdot 10^{d_n+2} \pmod{10^{d_n+3}} \\
    2^{p_{n}'} &\equiv x + 9 \cdot 10^{d_n} + b \cdot 10^{d_n+2} \pmod{10^{d_n+3}},
\end{align*}
for $0 \le a, b \le 9$ and $p_n' = p_n + 4k \cdot 5^{d_n-1}$ for $1 \le k \le 4$. 

Subtracting and dividing by $10^{d_n}$, we obtain $8 + 100(a-b) \equiv 0 \pmod{1000}$. Therefore, $a \equiv b \pmod{2}$. By hypothesis, $b_n = 1$ is $2$-\emph{forceable}, so $a$ is even. Therefore $b$ is also even, which means that $b_n = 9$ is also $2$-\emph{forceable}, as desired.
\end{proof}

\begin{corollary}\label{cor:4} Suppose that $b_n$ is $3$-\emph{forceable}. If $b_n = 1$, then it is the unique $3$-\emph{forceable} $b_n$. The same result holds if $b_n=9$. That is, exactly one of $b_n=1$ and $b_n=9$ is $3$-\emph{forceable}; the other 
$b_n$ is $3$-\emph{unforceable}. \end{corollary}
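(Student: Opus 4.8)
The plan is to reduce the statement to a single assertion and then run a ``mod $16$'' refinement of the computation in Corollary \ref{cor:3}. First I would record that $\beta$-forceability is monotone: a $3$-\emph{forceable} digit is automatically $2$-\emph{forceable}. Hence if $b_n = 1$ is $3$-\emph{forceable} it is $2$-\emph{forceable}, and Corollary \ref{cor:3} makes $b_n = 9$ also $2$-\emph{forceable}, while Corollaries \ref{cor:1} and \ref{cor:2} forbid any of $b_n = 3, 5, 7$ from being $2$-\emph{forceable} (each would otherwise be the \emph{unique} $2$-\emph{forceable} digit, contradicting the $2$-forceability of $1$). Consequently $3, 5, 7$ are $3$-\emph{unforceable}, and the entire content of the corollary reduces to one claim: if both $b_n = 1$ and $b_n = 9$ are $2$-\emph{forceable}, then exactly one of them is $3$-\emph{forceable}. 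The uniqueness statements follow at once.

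To prove this I would fix large exponents $P$ and $P'$ witnessing the $2$-forceability of $b_n = 1$ and $b_n = 9$, so that $2^{P} \equiv x + 1\cdot 10^{d_n} + c\cdot 10^{d_n+3} \pmod{10^{d_n+4}}$ and $2^{P'} \equiv x + 9\cdot 10^{d_n} + c'\cdot 10^{d_n+3} \pmod{10^{d_n+4}}$, where $x < 10^{d_n}$ carries the fixed lower digits and the digits at positions $d_n+1, d_n+2$ vanish for both. Since $2^{P} \equiv 2^{P'} \equiv x \pmod{10^{d_n}}$, Lemma \ref{lemma:1} gives $P' = P + 4m\cdot 5^{d_n-1}$ for some integer $m$, and because the digit at $d_n$ differs ($1$ versus $9$), Lemma \ref{lemma:2} forces $m \not\equiv 0 \pmod 5$. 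In particular $5^{d_n} \mid 2^{P'-P} - 1$, so $10^{d_n} \mid 2^{P'} - 2^{P}$, and the cofactor $(2^{P'}-2^{P})/10^{d_n} = 2^{P-d_n}\,(2^{4m5^{d_n-1}}-1)/5^{d_n}$ equals $2^{P-d_n}$ times an odd integer $w$.

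The core step is to subtract the two congruences and divide by $10^{d_n}$, which yields $2^{P-d_n} w \equiv 8 + 1000(c' - c) \pmod{10^4}$. Choosing the witnesses large enough that $P - d_n \ge 4$ (automatic once $2^P$ has a digit at position $d_n+3$), the left side vanishes modulo $16$; since $1000 \equiv 8 \pmod{16}$, this gives $8\bigl(1 + c' - c\bigr) \equiv 0 \pmod{16}$, that is, $c' - c$ is odd. This is precisely the analogue of the mod-$8$ computation in Corollary \ref{cor:3}, but pushed one digit further out so that the relevant coefficient is $10^3$ rather than $10^2$; the extra factor of two flips the conclusion from ``equal parity'' to ``opposite parity,'' so exactly one of $c, c'$ is even.

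Finally I would translate parity back into forceability. Among all witnesses of $b_n = 1$ with the two intermediate digits zeroed, Lemma \ref{lemma:2} applied at level $d_n+3$ shows the digit at position $d_n+3$ runs through exactly one parity class, so its parity is an invariant and $b_n = 1$ is $3$-\emph{forceable} precisely when $c$ is even; likewise $b_n = 9$ is $3$-\emph{forceable} precisely when $c'$ is even. Since exactly one of $c, c'$ is even, exactly one of $b_n = 1, 9$ is $3$-\emph{forceable}, and together with the first paragraph this is the unique $3$-\emph{forceable} digit. The step I expect to require the most care is this last one: justifying that zeroing the digits at $d_n+1$ and $d_n+2$ does not consume the freedom needed to control the digit at $d_n+3$, and that the parity of the latter is genuinely independent of the chosen witness. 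This is handled by applying Lemma \ref{lemma:2} successively, one position at a time, exactly as in Example \ref{examp:enforc}.
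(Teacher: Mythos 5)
Your proposal is correct and follows essentially the same route as the paper: subtract the two congruences for the $b_n=1$ and $b_n=9$ witnesses, divide by $10^{d_n}$, and read off $8 + 1000(c'-c) \equiv 0$ modulo a power of $2$ to conclude the digits at position $d_n+3$ have opposite parities. Your version is somewhat more careful than the paper's in two places it leaves implicit — ruling out $3,5,7$ via Corollaries \ref{cor:1} and \ref{cor:2}, and verifying via Lemma \ref{lemma:2} that the parity of the digit at $d_n+3$ is independent of the chosen witness — but the underlying argument is the same.
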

\begin{proof}
Suppose that $b_n=1$ or $b_n=9$ is $2$-\emph{forceable}. Then by Corollary \ref{cor:3}, there exist $p_n$ and $p_n'$ such that 

\begin{align*}
    2^{p_{n}} &\equiv x + 1 \cdot 10^{d_n} + a \cdot 10^{d_n+3} \pmod{10^{d_n+4}} \\
    2^{p_{n}'} &\equiv x + 9 \cdot 10^{d_n} + b \cdot 10^{d_n+3} \pmod{10^{d_n+4}},
\end{align*}
where again $0 \le a, b \le 9$ and $p_n' = p_n + 4k \cdot 5^{d_n-1}$ for $1 \le k \le 4$. 

Subtracting and dividing out by $10^{d_n}$, we have $8 + 1000(a-b) \equiv 0 \pmod{10000}$. So $a$ and $b$ must have different parities. If $b_n = 1$ is $3$-\emph{forceable}, then $a$ is even, so $b$ is odd. Therefore $b_n = 9$ is $3$-\emph{unforceable}, as desired. The same logic holds if $b_n = 9$ is $3$-\emph{forceable}. The claim is thus proved. 
\end{proof}

\section{Heuristic argument for the expected sparsity of nonzero digits}\label{sec:4}
We justify a heuristic that predicts the expected number of zeros between nonzero digits in our sequence. 

\begin{heuristic}[Expected Gap of zeros] Let $\ex[x]$ be the expected value of $x$. Then \[\ex[\text{\# of zeros between nonzero digits}] = \dfrac{13}{4}\] \end{heuristic}

\begin{proof}
Empirical evidence (refer to Table \ref{freq}) suggests that the probability that a nonzero digit is 1 or 9 is approximately $\dfrac{1}{4}$. We can also think of $1$ and $9$ being their own group of digits, with $3, 5,$ and $7$ being the other three groups of digits. According to Corollaries 1 and 2, if the nonzero digit is $3, 5,$ or $7$, there is only one choice of starting digit that is $2$-forceable. We have already seen that if the leading digit is $3, 5,$ or $7$, we can force two zeros with a greedy algorithm, and if the leading digit is $1$ or $9$, we can force three zeros with a greedy algorithm. Then $\ex[\text{\# of zeros between nonzero digits}]$ is equal to \[\dfrac{3}{4}\ex[\text{Starting digit 3, 5, or 7}] + \dfrac{1}{4}\ex[\text{Starting digit 1 or 9}].\]

For convenience, let $N = \ex[\text{Starting digit 3, 5, or 7}]$ and $M = \ex[\text{Starting digit 1 or 9}]$.
If we assume that the parity of each digit thereafter is uniformly random, then the probability that we obtain exactly $n$ zeros after the two forced zeros is $\frac{1}{2^{n+1}}$. Hence, \[N = 2 + \sum_{n=1}^{\infty} \dfrac{n}{2^{n+1}}.\]

From elementary calculus, we have \[\sum_{n=0}^{\infty} nr^n = \dfrac{r}{(1-r)^2}.\] Therefore,  $N$ evaluates as \begin{equation}2 + \sum_{n=1}^{\infty} \dfrac{n}{2^{n+1}} = 2 + \dfrac{1}{2}\left(\dfrac{\frac{1}{2}}{(1-\frac{1}{2})^2}\right) = 3.\end{equation}

We now turn to evaluating $M$. Intuitively, Corollary \ref{cor:4} tells us that $M$ should be larger than $N$.
More precisely, Corollary \ref{cor:4} shows that we have three guaranteed zeros. Assuming that every zero thereafter is uniformly random, we have \[M = 3 + \sum_{n=1}^{\infty} \dfrac{n}{2^{n+1}} = 4\]
\vspace{0.25cm}

Thus, $\ex[\text{\# of zeros between nonzero digits}] = \dfrac{1}{4} M + \dfrac{3}{4}N = \dfrac{1}{4} \cdot 4 + \dfrac{3}{4} \cdot 3 = \dfrac{13}{4}$, as desired.
\end{proof}

The estimation of $\frac{13}{4} = 3.25$ is remarkably close to the empirical average of $3.247$ gathered for $d_{1013}$ (see Tables 1 and 2). 

\section{Acknowledgments}
The author would like to thank Lawrence C. Washington for suggesting the problem and for his helpful guidance throughout the process. 
\eject

\bibliography{bib}

\begin{thebibliography}{1}

\bibitem{CS}
C.~L. Stewart, ``On the representation of an integer in two different bases,''
  {\em J. reine angew. Math.}, vol.~319, pp.~63--72, 1980.

\bibitem{DR}
D.~G. Radcliffe, ``The growth of digital sums of powers of 2,'' {\em
  arXiv.org}, 2016.

\end{thebibliography}
\bibliographystyle{ieeetr}

\eject

\begin{appendices}
\section{Data}
\begin{table}[!htb]
\label{first-q}
\begin{tabular}{r|r|r} \hline
\multicolumn{1}{c|}{Gap Size} & \multicolumn{1}{c|}{Frequency} & \multicolumn{1}{c}{Weighted Sum} \\ \hline
0                             & 0                              & 0                                \\
1                             & 0                              & 0                                \\
2                             & 98                             & 196                              \\
3                             & 81                             & 243                              \\
4                             & 34                             & 136                              \\
5                             & 18                             & 90                               \\
6                             & 11                             & 66                               \\
7                             & 5                              & 35                               \\
8                             & 2                              & 16                               \\
9                             & 1                              & 9                                \\
10                            & 1                              & 10                               \\
11                            & 0                              & 0                                \\
12                            & 1                              & 12                               \\ \hline
\multicolumn{1}{l|}{Average}  & 3.226190476                    & \multicolumn{1}{l}{}         
\end{tabular}
\quad
\begin{tabular}{r|r|r} \hline
\multicolumn{1}{c|}{Gap Size} & \multicolumn{1}{c|}{Frequency} & \multicolumn{1}{c}{Weighted Sum} \\ \hline
0                             & 0                              & 0                                \\
1                             & 0                              & 0                                \\
2                             & 193                            & 386                              \\
3                             & 159                            & 477                              \\
4                             & 69                             & 276                              \\
5                             & 44                             & 220                              \\
6                             & 19                             & 114                              \\
7                             & 15                             & 105                              \\
8                             & 4                              & 32                               \\
9                             & 1                              & 9                                \\
10                            & 1                              & 10                               \\
11                            & 0                              & 0                                \\
12                            & 1                              & 12                               \\ \hline
\multicolumn{1}{l|}{Average}  & 3.243083004                    & \multicolumn{1}{l}{}            
\end{tabular}
\caption{First and second quartiles for frequency versus number of zeros. The third column gives a more accurate sense of how much each data point contributes to the average gap. The data were collected by running the algorithm until 1013 nonzero digits were found. That is, $d_{1013}$ was computed.}
\end{table}

\begin{table}[!htb]
\begin{tabular}{r|r|r} \hline
\multicolumn{1}{c|}{Gap Size} & \multicolumn{1}{c|}{Frequency} & \multicolumn{1}{c}{Weighted Sum} \\ \hline
0                             & 0                              & 0                                \\
1                             & 0                              & 0                                \\
2                             & 293                            & 586                              \\
3                             & 235                            & 705                              \\
4                             & 112                            & 448                              \\
5                             & 62                             & 310                              \\
6                             & 26                             & 156                              \\
7                             & 21                             & 147                              \\
8                             & 7                              & 56                               \\
9                             & 1                              & 9                                \\
10                            & 1                              & 10                               \\
11                            & 1                              & 11                               \\
12                            & 1                              & 12                               \\ \hline
\multicolumn{1}{l|}{Average}  & 3.223684211                    & \multicolumn{1}{l}{}            
\end{tabular}
\quad
\begin{tabular}{r|r|r} \hline
\multicolumn{1}{c|}{Gap Size} & \multicolumn{1}{c|}{Frequency} & \multicolumn{1}{c}{Weighted Sum} \\ \hline
0                             & 0                              & 0                                \\
1                             & 0                              & 0                                \\
2                             & 404                            & 808                              \\
3                             & 294                            & 882                              \\
4                             & 145                            & 580                              \\
5                             & 86                             & 430                              \\
6                             & 41                             & 246                              \\
7                             & 26                             & 182                              \\
8                             & 9                              & 72                               \\
9                             & 4                              & 36                               \\
10                            & 1                              & 10                               \\
11                            & 2                              & 22                               \\
12                            & 2                              & 24                               \\ \hline
\multicolumn{1}{l|}{Average}  & 3.246548323                    & \multicolumn{1}{l}{}            
\end{tabular}
\caption{Third and fourth quartiles for frequency versus number of zeros. The third column gives a more accurate sense of how much each data point contributes to the average gap.}
\end{table}

\eject
\begin{table}[!htb]
\centering
\label{freq}
\begin{tabular}{r|r|r|r|r|r|l} \hline 
\multicolumn{1}{c|}{Digit} & \multicolumn{1}{c|}{1} & \multicolumn{1}{c|}{3} & \multicolumn{1}{c|}{5} & \multicolumn{1}{c|}{7} & \multicolumn{1}{c|}{9} & Total                     \\ \hline
Frequency                  & 55                     & 128                    & 139                    & 124                    & 67                     & \multicolumn{1}{r}{513}   \\ \cline{2-6}
                           & 0.107                  & 0.250                  & 0.271                  & 0.242                  & 0.131                  &                           \\ \hline
\multicolumn{1}{c|}{Digit} & \multicolumn{1}{c|}{1} & \multicolumn{1}{c|}{3} & \multicolumn{1}{c|}{5} & \multicolumn{1}{c|}{7} & \multicolumn{1}{c|}{9} & \multicolumn{1}{c}{Total} \\ \hline
Frequency                  & 113                    & 260                    & 272                    & 244                    & 124                    & \multicolumn{1}{r}{1013}  \\ \cline{2-6}
                           & 0.112                  & 0.257                  & 0.269                  & 0.241                  & 0.122                  &                          
\end{tabular}

\caption{Frequency of nonzero digits. Note the higher than expected frequency of 5 and the lower than expected frequencies of 1 and 9.}
\end{table}

\begin{table}[!htb]
\centering

\label{my-dasdf}
\begin{tabular}{l|rrrrrrrrrrr}
\hline
\multicolumn{1}{c|}{\textbf{}} & \multicolumn{11}{c}{\textbf{Number of zeros}}                                                                                                                                                                                                                                                \\ \hline
\multicolumn{1}{c|}{Digit}     & \multicolumn{1}{c|}{2}   & \multicolumn{1}{c|}{3}  & \multicolumn{1}{c|}{4}  & \multicolumn{1}{c|}{5}  & \multicolumn{1}{c|}{6}  & \multicolumn{1}{c|}{7}  & \multicolumn{1}{c|}{8} & \multicolumn{1}{c|}{9} & \multicolumn{1}{c|}{10} & \multicolumn{1}{c|}{11} & \multicolumn{1}{c}{12} \\
1                              & \multicolumn{1}{r|}{0}   & \multicolumn{1}{r|}{46} & \multicolumn{1}{r|}{35} & \multicolumn{1}{r|}{19} & \multicolumn{1}{r|}{8}  & \multicolumn{1}{r|}{2}  & \multicolumn{1}{r|}{1} & \multicolumn{1}{r|}{2} & \multicolumn{1}{r|}{0}  & \multicolumn{1}{r|}{0}  & 0                      \\
3                              & \multicolumn{1}{r|}{140} & \multicolumn{1}{r|}{72} & \multicolumn{1}{r|}{23} & \multicolumn{1}{r|}{9}  & \multicolumn{1}{r|}{6}  & \multicolumn{1}{r|}{2}  & \multicolumn{1}{r|}{3} & \multicolumn{1}{r|}{0} & \multicolumn{1}{r|}{1}  & \multicolumn{1}{r|}{2}  & 1                      \\
5                              & \multicolumn{1}{r|}{133} & \multicolumn{1}{r|}{66} & \multicolumn{1}{r|}{24} & \multicolumn{1}{r|}{23} & \multicolumn{1}{r|}{11} & \multicolumn{1}{r|}{11} & \multicolumn{1}{r|}{3} & \multicolumn{1}{r|}{0} & \multicolumn{1}{r|}{0}  & \multicolumn{1}{r|}{0}  & 1                      \\
7                              & \multicolumn{1}{r|}{129} & \multicolumn{1}{r|}{54} & \multicolumn{1}{r|}{27} & \multicolumn{1}{r|}{18} & \multicolumn{1}{r|}{5}  & \multicolumn{1}{r|}{8}  & \multicolumn{1}{r|}{1} & \multicolumn{1}{r|}{2} & \multicolumn{1}{r|}{0}  & \multicolumn{1}{r|}{0}  & 0                      \\
9                              & \multicolumn{1}{r|}{0}   & \multicolumn{1}{r|}{56} & \multicolumn{1}{r|}{36} & \multicolumn{1}{r|}{17} & \multicolumn{1}{r|}{11} & \multicolumn{1}{r|}{3}  & \multicolumn{1}{r|}{1} & \multicolumn{1}{r|}{0} & \multicolumn{1}{r|}{0}  & \multicolumn{1}{r|}{0}  & 0                      \\
\end{tabular}
\caption{The number of zeros between consecutive nonzero digits for 1013 nonzero digits.}
\end{table}

\begin{table}[!htb]
\centering
\label{my-freq}
\begin{tabular}{l|rrrrrrrrrrr}
\hline
\multicolumn{1}{c|}{\textbf{}} & \multicolumn{11}{c}{\textbf{Probabilities}}                                                                                                                                                                                                                              \\ \hline
Digit                          & \multicolumn{1}{c}{2} & \multicolumn{1}{c}{3} & \multicolumn{1}{c}{4} & \multicolumn{1}{c}{5} & \multicolumn{1}{c}{6} & \multicolumn{1}{c}{7} & \multicolumn{1}{c}{8} & \multicolumn{1}{c}{9} & \multicolumn{1}{c}{10} & \multicolumn{1}{c}{11} & \multicolumn{1}{c}{12} \\
1                              & 0.000                 & 0.407                 & 0.310                 & 0.168                 & 0.071                 & 0.018                 & 0.009                 & 0.018                 & 0.000                  & 0.000                  & 0.000                  \\
3                              & 0.541                 & 0.278                 & 0.089                 & 0.035                 & 0.023                 & 0.008                 & 0.012                 & 0.000                 & 0.004                  & 0.008                  & 0.004                  \\
5                              & 0.489                 & 0.243                 & 0.088                 & 0.085                 & 0.040                 & 0.040                 & 0.011                 & 0.000                 & 0.000                  & 0.000                  & 0.004                  \\
7                              & 0.529                 & 0.221                 & 0.111                 & 0.074                 & 0.020                 & 0.033                 & 0.004                 & 0.008                 & 0.000                  & 0.000                  & 0.000                  \\
9                              & 0.000                 & 0.452                 & 0.290                 & 0.137                 & 0.089                 & 0.024                 & 0.008                 & 0.000                 & 0.000                  & 0.000                  & 0.000                  \\ 
\end{tabular}

\caption{The corresponding probabilities for gaps of size $n$ following the nonzero digits, computed directly from Table \ref{my-freq}.}
\end{table}
\eject

\section{Code}

We define our algorithm as follows, where $m = p_k$.

\begin{algorithm}
\caption{Selecting Powers of Two}
\label{CHalgorithm}
\begin{algorithmic}[1]
\Procedure{findzeros}{$m$, $k$}
\State $s  \leftarrow 2^{m} \pmod{10^k}$ 
\State $f \leftarrow 2^{4 \cdot 5^{k-2}} \pmod{10^{k}}$
\For{$i = 1, 2, ..., 5$}
\State $x \leftarrow s \cdot f \pmod{10^k}$
\If{the leading digit of $x$ is zero} 
	\State $m \leftarrow m + k \cdot 4 \cdot 5^{k-2}$
	\State \Return findzeros($m$, $k+1$)
    \EndIf
\EndFor
\If{we cannot force a zero} \Return{($m$, $k-1$)} \EndIf
\EndProcedure
\Procedure{nextTerm}{$c$, $e$}
\State maxCurrent $\leftarrow c$
\State maxExponent $\leftarrow e$
\For {$n = 1, 2, ..., 5$}
    \State $c \leftarrow c + 4 \cdot 5^{e-1}$ 
    \State $t \leftarrow $ findzeros($c$, $e+2$)
    \If {$t[2] > $ maxExponent}
    	\State maxExponent $\leftarrow t[2]$
        \State maxCurrent $\leftarrow t[1]$
    \EndIf
\EndFor
\State \Return (maxCurrent, maxExponent)
\EndProcedure
\end{algorithmic}
\end{algorithm}

\end{appendices}

\end{document}